\numberwithin{equation}{section}
\newtheorem{example}{Example}[section]
\newtheorem{theorem}{Theorem}[section]
\newtheorem{lemma}{Lemma}[section]
\newtheorem{proposition}{Proposition}[section]
\newtheorem{remark}[example]{Remark}
\newtheorem{definition}[example]{Definition}
\newtheorem{corollary}[example]{Corollary}
\newcommand{\R}{{\mathbb R}}
\newcommand{\N}{{\mathbb N}}
\newcommand{\be}{\begin{eqnarray}}
\newcommand{\ee}{\end{eqnarray}}
\renewcommand{\d}{{\rm d}}
\newcommand{\md}{{\rm d}}
\renewcommand{\O}{\Omega}
\newcommand{\wto}{\rightharpoonup}
\renewcommand{\wto}{\rightharpoonup}
\newcommand{\DD}{{\mathcal D}}
\newcommand{\EE}{{\mathcal E}}
\newcommand{\II}{{\mathcal I}}
\newcommand{\NN}{{\mathcal N}}
\newcommand{\QQ}{{\mathcal Q}}
\newcommand{\VV}{{\mathcal{A}}}
\newcommand{\ZZ}{{\mathcal Z}}
\newcommand{\YY}{{\mathcal{A}}}
\newcommand{\p}{\partial}
\newcommand{\qqquad}[0]{\qquad\qquad}
\newcommand{\NT}[0]{\notag}
\newcommand{\dist}{{\operatorname{dist}}}
\DeclareMathOperator{\cof}{Cof}
\author{Martin Kru\v{z}\'{\i}k\footnote{The Czech Academy of Sciences, Institute of Information Theory and Automation,
, Pod vod\'{a}renskou
v\v{e}\v{z}\'{\i}~4, CZ-182~08~Praha~8, Czechia (corresponding
address) \& Faculty of Civil Engineering, Czech Technical
University, Th\'{a}kurova 7, CZ-166~ 29~Praha~6, Czechia}\ \
Petr Pelech\footnote{Mathematical Institute, Charles University, Sokolovsk\'{a} 83, 186~00 Praha 8, Czechia}\ \
Anja Schl\"{o}merkemper\footnote{Institute of Mathematics,
University of W\"{u}rzburg, Emil-Fischer-Stra\ss e 40, 97074 W\"{u}rzburg,    Germany}
}
\title{Shape memory alloys as gradient-polyconvex materials}
\begin{document}

\maketitle
\begin{abstract}
 We show existence of an energetic solution to a model of shape memory alloys
 in~which the~elastic energy is described by means of a gradient-polyconvex functional.
 This allows us to show existence of a solution based on weak continuity of nonlinear minors
 of deformation gradients in Sobolev spaces.
 Resulting deformations are orientation-preserving and injective everywhere in a domain representing the specimen.
\end{abstract}
\medskip
\noindent
{\bf Key Words:} Gradient polyconvexity, invertibility of deformations, orientation-preserving mappings, shape memory alloys \\
\medskip
\noindent
{\bf AMS Subject Classification.}
49J45, 35B05

\section{Introduction}
Hyperelasticity is a special area of Cauchy elasticity where one assumes
that the first Piola-Kirchhoff stress tensor $S$ possesses a potential
(called stored energy density) $W : \R^{3\times 3} \to [-w, \infty]$,
for some $w\geq 0$.
In other words, 
\begin{align} \label{1stPK}
  S:=\frac{\partial W(F)}{\partial F}
\end{align}
on its domain, where $F\in\R^{3\times 3}$ is such that $\det F>0$.
This concept emphasizes that all work done by external loads on the specimen
is stored in it. 
The principle of frame-indifference requires
that $W$ satisfies for all $F\in\R^{3\times 3}$
and all proper rotations $R\in{\rm SO}(3)$ 
\begin{align*}
  W(F)=W(RF)=\tilde W(F^\top F)=\tilde W(C)\ ,
\end{align*}
where $C:=F^\top F$ is the right Cauchy-Green strain tensor
and $\tilde W: \R^{3\times 3} \to [-w,\infty]$.
Additionally, every elastic material is assumed to resist extreme compression,
which is modeled by assuming
\begin{align} \label{det} 
W(F)\to+\infty \text{ if }\det F\to 0_+ .
\end{align}
Let the reference configuration be a bounded Lipshitz domain
$\Omega\subset \R^3$.
If we consider a deformation $y:\bar\O \to \R^3$,
which is a mapping that assigns to each point in the closure
of the reference configuration $\bar \Omega$ its position after deformation,  
solutions to corresponding elasticity equations can be formally found
by minimizing an energy functional
\begin{align} \label{energy-funct}
I(y):=\int_\O W(\nabla y(x))\,\md x-\ell(y)
\end{align}
over a class of admissible deformations.
Here $\ell$ is a functional on the set of deformations expressing
(in a simplified way) the work of external loads on the specimen
and $\nabla y$ is the deformation gradient which quantifies the strain.
We only allow for deformations which are orientation-preserving,
i.e. if $a,b,c\in\R^3$ satisfy  $(a\times b)\cdot c>0$,
then $(Fa\times Fb)\cdot Fc>0$ for every $F:=\nabla y(x)$ and $x\in\O$.
Which means that $\det F>0$.
This condition can be expressed by extending $W$ by infinity to matrices
with nonpositive determinants, 
\begin{align}
  \label{det2} 
  W(F):=+\infty \text{ if }\det F\le 0 .
\end{align}

In view of \eqref{1stPK}, \eqref{det}, and \eqref{det2}
we see that $W:\R^{3\times 3}\to [-w,+\infty]$, for some $w\ge 0$,
is continuous in the sense that if $F_k\to F$ in $\R^{3\times 3}$
for $k\to+\infty$,
then $\lim_{k\to+\infty}W(F_k)= W(F)$.
Furthermore, $W$ is differentiable on the set of matrices
with positive determinants.  

A key question immediately appears:
Under which conditions does the functional $I$ in \eqref{energy-funct}
possess minimizers?
Relying on the direct method of the calculus of variations,
the usual approach to address this question is to study
(weak) lower semicontinuity of the functional $I$
on appropriate Banach spaces containing the admissible deformations.
For definiteness,
we assume that $y\mapsto -\ell(y)$ is weakly sequentially lower semicontinuous.
Thus the question reduces do a discussion of~the~assumptions on $W$.
It is well known that \eqref{det} prevents us from assuming convexity of $W$.
See e.g.~\cite{dacorogna} or the recent review \cite{benesova-kruzik}
for a detailed exposition of weak lower semicontinuity.
In his seminal contribution \cite{ball77},
J.M.~Ball defined a polyconvex stored energy density $W$
by assuming that there is a convex and lower semicontinuous function
$\bar W:\R^{19}\to [-w,+\infty]$ such that 
\begin{align*}
  W(F):=\bar W(F,\cof F,\det F)\ .
\end{align*}
Here $\cof F$ is the~cofactor matrix of $F$, which for $F$ being invertible satisfies Cramer's rule   
\begin{align*}
\cof F   =     (\det F) F^{-\top}\ .
\end{align*}
It is well-known that polyconvexity is satisfied for a  large class of constitutive functions and allows for existence of minimizers of $I$ under \eqref{det} and \eqref{det2}. On the other hand, there are still situations where  polyconvexity cannot be adopted. A prominent example are shape-memory alloys, see e.g.\,\cite{ball-james,bhattacharya, mueller}, where $W$ has the so-called multi-well structure. Namely, there is a high-temperature phase called austenite, which is usually of cubic symmetry, and a low-temperature phase called martensite, which is less symmetric and exists in more variants, e.g., in three for the tetragonal structure (NiMnGa) or in twelve for the monoclinic one (NiTi).
We can assume that 
\begin{align}\label{stored-sma}
W(F):=\min_{0\le i\le M}W_i(F)\ ,
\end{align}
where $W_i:\R^{3\times 3}\to[-w_i,+\infty]$, $w_i\ge 0$, is the stored energy density of the $i$-th variant of martensite if $i>0$, and $W_0$ is the stored energy density of the austenite.  For every admissible $i$, $W_i(F)=-w_i$ if and only if $F=RF_i$ for a given matrix $F_i\in\R^{3\times 3}$ and an arbitrary proper rotation $R\in{\rm SO}(3)$. 

Let us emphasize that \eqref{stored-sma} ruins even generalized notions
of convexity as e.g. rank-one convexity
(we recall that rank-one convex functions are convex on line segments
whose endpoints differ by~a~rank-one matrix
and that rank-one convexity is a necessary condition for polyconvexity;
cf.\,\cite{dacorogna}, for instance).
Namely, it is observed (see e.g.\,\cite{ball-james,bhattacharya})
that $w_i=w_j$
whenever $i,j\ne 0$
and that there is a~proper rotation $R_{ij}$ such that 
${\rm rank}(R_{ij}F_i-F_j)=1$.
Hence, generically, $W(R_{ij}F_i)=W(F_j)=-w_i$,
but $W(F)>-w_i$ if $F$ is on the line segment between $R_{ij}F_i$ and $F_j$;
however, not having a convexity property at hand
that implied existence of minimizers
is in~accordance with experimental observations for these alloys.

Indeed, nonexistence of a minimizer corresponds to the formation
of microstructure of strain-states
which is mathematically manifested via faster and faster oscillation
of deformation gradients in minimizing sequences
driving the functional $I$ to its infimum.
One can then formulate a minimization problem
for a lower semicontinuous envelope of $I$,
the so-called relaxation, see, e.g., \cite{dacorogna}.
Such a relaxation yields information of the effective behaviour
of the material and on the set of~possible microstructures.
Thus relaxation is not only an important tool for mathematical analysis,
but also for applications.
For numerical considerations it is a challenging problem,
because the relaxation formula is generically not obtained in a closed form.
Further difficulties come from the fact
that a sound mathematical relaxation theory is developed only
if $W$ has $p$-growth; that is,
for some $c>1$, $p \in (1,+\infty)$ and all $F\in \R^{3 \times 3}$
the inequality
\begin{align*}
\frac1c(|F|^p-1) \leq W(F) \leq c(1+|F|^p)
\end{align*}
is satisfied, which in particular implies that $W<+\infty$.
We refer, however, to \cite{benesova-kruzik,conti-dolzmann, krw}
for results allowing for infinite energies.
Nevertheless, these works include other assumptions which severely restrict their usage.
Let us point out that the right Cauchy-Green strain tensor $F^\top F$ maps SO$(3)F$
as well as (O$(3)\setminus$SO$(3)$)$F$ to the same point.
Here O$(3)$ are orthogonal matrices with  determinant $\pm 1$.
Thus, for example, 
$F\mapsto |F^\top F-\mathbb{I}|$ is minimized on two energy wells,
on SO$(3)$ and also on O$(3)\setminus$SO$(3)$.
However, the latter set is not acceptable in elasticity because the corresponding minimizing affine deformation is a mirror reflection.
In~order to~distinguish between these two~wells,
it is necessary to~incorporate $\det F$ in~the~model properly.

Besides relaxation, another approach guaranteeing existence of minimizers is to resort to nonsimple materials, i.e., materials whose stored energy density depends (in a convex way)  on higher deformation gradients. This idea goes back to Toupin \cite{Toupin:62,Toupin:64} and is used in many works from then on \cite{BCO,vidoli,forest,Silh88PTNB}, including work on shape-memory alloys \cite{ball-crooks,ball-mora}.
   Simple examples are functionals of the form    
\begin{align*}
I(y):=\int_\O W(\nabla y(x))+\varepsilon|\nabla^2 y(x)|^p\,\md x-\ell(y)\ ,
\end{align*} 
where $\varepsilon>0$.  Obviously, the second-gradient term brings additional compactness to the problem, which allows  to require only strong lower semicontinuity 
of the term 
$$\nabla y\mapsto \int_\O W(\nabla y(x))\,\md x$$
   for existence of minimizers.    

Here we follow a different approach recently suggested in \cite{bbmkas},
which is a natural extension of~polyconvexity exploiting weak continuity
of minors in Sobolev spaces.
Instead of the~full second gradient,
it is assumed that the stored energy of the material depends on the deformation gradient $\nabla y$
and on gradients of nonlinear minors of $\nabla y$, i.e., on $\nabla[\cof\nabla y]$ and on $\nabla[\det\nabla y]$.
The corresponding functionals are then called gradient polyconvex.
While we assume convexity of the stored energy density in the two latter terms,
this is not assumed in the $\nabla y$ variable.
The advantage is that minimizers are elements of Sobolev spaces $W^{1,p}(\Omega, \R^3)$
and no higher regularity is required.

The following example is inspired from \cite{bbmkas}.
It shows that there are maps with smooth nonlinear minors
whose deformation gradient is \emph{not} a  Sobolev map.
Hence, gradient polyconvex energies are more general
than second-gradient ones.  

\begin{example} \label{exa-grad-poly} 
  Let $\Omega = (0,1)^3$.
  For~functions $f,g : (0,1) \to (0, +\infty)$ to~be~specified later,
  let us consider the deformation
\begin{align*}
  y(x_1,x_2,x_3) := \left(x_1, x_2 f(x_1), x_3 g(x_1) \right).
\end{align*}
Then
\begin{align*}
  \nabla y(x_1,x_2,x_3)
  &=
  \left(
    \begin{array}{ccc}
      1 & 0 & 0 \\ 
      x_2 f'(x_1) & f(x_1) & 0 \\ 
      x_3 g'(x_1) & 0 & g(x_1)
    \end{array}
  \right), \\
  \cof \nabla y(x_1,x_2,x_3)
  &=
  \left(
    \begin{array}{ccc}
      f(x_1) g(x_1) &  - x_2 f'(x_1) g(x_1) & - x_3 f(x_1) g'(x_1)  \\ 
      0 & g(x_1) & 0 \\ 
      0 & 0 & f(x_1)
    \end{array}
  \right)
\end{align*}
and
\begin{align*}
  \det \nabla y(x_1,x_2,x_3)
  = f(x_1) g(x_1) > 0\ .
\end{align*}
Finally, the~non-zero entries of~$\nabla^2 y(x_1, x_2, x_3)$ are
\begin{align} \label{sec-non-zero}
    x_2 f''(x_1),\quad f'(x_1), \quad x_3 g''(x_1),\quad g'(x_1) \ .
\end{align}
Note that we have in particular
\begin{align*}
  |\nabla^2 y(x_1, x_2, x_3)| \geq |x_2| |f''(x_1)|.
\end{align*}
Any functions $f,g$ such that $y \in W^{1,p}(\O; \R^3)$,
\todo{Please check the exponents of $q,r,s$. AS Seems to be OK. PP}
$\cof \nabla y \in W^{1,q}(\O; \R^{3\times 3})$,
$\det \nabla y \in W^{1,r}(\O; (0,\infty))$,
$(\det \nabla y)^{-s} \in L^1(\O)$ for some $p,q,r\geq 1$ and $s>0$, but such that one of the quantities in \eqref{sec-non-zero} is not a function in $L^p(\O)$ yield a useful example since then $y\notin W^{2,p}(\O; \R^3)$. To be specific, we choose for $1>\varepsilon > 0$
\begin{align*}
  f(x_1) = x_1^{1-\varepsilon}
  \quad
  \text{and}
  \quad
  g(x_1) = x_1^{1 + \varepsilon}.
\end{align*}
Hence
\begin{align*}
  f'(x_1) &= (1-\varepsilon) x_1^{-\varepsilon},
  &
  g'(x_1) &= (1+\varepsilon) x_1^{\varepsilon}, \\
  f''(x_1) &= -\varepsilon (1-\varepsilon) x_1^{-1-\varepsilon}
  &
  g''(x_1) &= \varepsilon (1+\varepsilon) x_1^{-1+\varepsilon}.
\end{align*}
Since $x_2 f''(x_1)$ is not integrable, we have $\nabla^2 y\not\in L^1(\O;\R^{3\times 3\times 3})$ and thus $y\not\in W^{2,1}(\O;\R^3)$.
We have only $y\in W^{1,p}(\O;\R^3)\cap L^\infty(\O;\R^3)$
for every $1 \le p < 1/\varepsilon$.
Moreover, direct computation shows that both
$\cof \nabla y$ and $\det \nabla y$ lie in~$W^{1,\infty}$.
Finally, $\det \nabla y = x_1^2 >0$
and $(\det\nabla y)^{-s} \in L^1(\O)$ for~all $0 < s < 1/2$.

Therefore, for any $r,q \geq 1$, $s>0$, requiring a~deformation $y:\Omega \to \R^3$ to~satisfy $\det \nabla y \in W^{1,r}(\Omega)$, $(\det \nabla y)^{-s} \in L^1(\O)$ and $\cof\nabla y \in W^{1,q}(\Omega; \R^{3 \times 3})$ is a weaker assumption than $y \in W^{2,1}(\Omega; \R^{3})$.   
\end{example}

\bigskip

 \section{Gradient polyconvexity}
We start with a definition of gradient polyconvexity.

\begin{definition}[See \cite{bbmkas}] \label{def-gpc}
Let  Let $\O\subset\R^3$ be a bounded open domain.   Let $\hat{W}:\R^{3\times 3}\times\R^{3\times 3\times 3}\times\R^3\to\R\cup\{+\infty\}$ be a lower semicontinuous function. The functional
\begin{align}\label{full-I}
J(y) = \int_\Omega \hat W(\nabla y(x), \nabla[ \cof \nabla y(x)], \nabla[ \det \nabla y(x)]) \d x,
\end{align}
defined for any measurable function $y: \Omega \to \R^3$ for which the weak derivatives $\nabla y$, $\nabla[ \cof \nabla y]$, $\nabla[ \det \nabla y]$ exist and are integrable is called {\em gradient polyconvex} if the function $\hat{W}(F,\cdot,\cdot)$ is convex for every $F\in\R^{3\times 3}$.
\end{definition}

With $J$ defined as in    \eqref{full-I} and a functional  $y\mapsto \ell(y)$ expressing the (negative) work of external loads we set 
\begin{align}\label{functional-sma}
I(y):=J(y)-\ell(y)\ .
\end{align}

Besides convexity properties, the~results of~weak lower semicontinuity
of $I$ on~$W^{1,p}(\O;\R^3)$ (for~$1\le p<+\infty$) rely on suitable coercivity properties. Here we assume that 
there are numbers $p,q,r>1$ and $c,s>0$ such that 
\begin{align}\label{growth-graddet2}
\hat W(F,\Delta_1,\Delta_2)\ge\begin{cases}
c\big(|F|^p  +|\cof F|^q+(\det F)^r+ (\det F)^{-s}+|\Delta_1|^q + |\Delta_2|^r\big)
&\text{ if }\det F>0,\\
+\infty&\text{ otherwise.} \end{cases}
\end{align}

   The following existence result is taken from \cite{bbmkas}. For the reader's convenience, we provide a proof below.  
\begin{proposition}
\label{prop-grad-poly}
Let $\O\subset\R^3$ be a bounded Lipschitz domain,  
and let $\Gamma=\Gamma_0\cup\Gamma_1$ be a $\md A$-measurable partition
of $\Gamma=\partial\O$ with the area of $\Gamma_0>0$.
Let further $-\ell:W^{1,p}(\O;\R^3)\to\R$
be a~weakly lower semicontinuous functional satisfying
for some $\tilde C>0$ and $1\le \bar p<p$
\begin{align*}
  \forall y\in W^{1,p}(\O;\R^3)
  :
  \quad
  \ell(y)\le \tilde C\|y\|^{\bar p}_{W^{1,p}(\O;\R^3)}\ .
\end{align*}
Further let $J$, as in \eqref{full-I}, be gradient polyconvex on $\Omega$
and such that there is a $\hat W$ as in Definition~\ref{def-gpc} which in addition satisfies \eqref{growth-graddet2} for    $p> 2$, $q\ge\frac{p}{p-1}$, $r>1$,  $s>0$.    Moreover,    assume that for some given measurable function $y_0: \Gamma_0 \to \R^3$ the following set 
 \begin{align*}
\mathcal{A}:&=\{y\in W^{1,p}(\O;\R^3):\ \cof \nabla y\in W^{1,q}(\O;\R^{3\times 3}),\ \det \nabla y\in W^{1,r}(\O),\nonumber\\
& \qquad (\det\nabla y)^{-s}\in L^1(\O),\  \det\nabla y>0\mbox{ a.e. in $\Omega$},\ y=y_0\mbox{ on }\Gamma_0\}
\end{align*}
is nonempty. If $\inf_{\mathcal{A}} I<\infty$  for $I$ from \eqref{functional-sma}, then
 the functional  $I$  has a minimizer on $\mathcal{A}$.
\end{proposition}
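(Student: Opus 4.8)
The plan is to run the direct method of the calculus of variations. Its only non-routine ingredient is the passage from weak to almost-everywhere convergence of the deformation gradients, which is forced by the extra integrability of the minors rather than by any convexity of $\hat W$ in its first slot.

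First I would fix a minimizing sequence $(y_k)\subset\mathcal{A}$ with $I(y_k)\to\inf_{\mathcal{A}}I$; since $\inf_{\mathcal{A}}I<\infty$ and $\mathcal{A}\neq\emptyset$ this is possible and $\sup_k I(y_k)<\infty$. To obtain compactness I would combine the coercivity \eqref{growth-graddet2} of $\hat W$ with the growth $\ell(y)\le\tilde C\|y\|_{W^{1,p}(\O;\R^3)}^{\bar p}$ and a Poincar\'e inequality tied to the Dirichlet datum $y=y_0$ on $\Gamma_0$ (here the hypothesis that the area of $\Gamma_0$ is positive is used). Since $\bar p<p$, the $|\nabla y|^p$-term dominates $\ell$, so $\sup_k I(y_k)<\infty$ yields uniform bounds: $(y_k)$ in $W^{1,p}(\O;\R^3)$, $(\cof\nabla y_k)$ in $W^{1,q}(\O;\R^{3\times3})$, $(\det\nabla y_k)$ in $W^{1,r}(\O)$, and $((\det\nabla y_k)^{-s})$ in $L^1(\O)$.

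By reflexivity of all the spaces involved (all exponents exceed $1$) I would extract a subsequence with $y_k\wto y$ in $W^{1,p}$, $\cof\nabla y_k\wto H$ in $W^{1,q}$ and $\det\nabla y_k\wto\delta$ in $W^{1,r}$. Rellich--Kondrachov then upgrades the latter two to strong $L^q$, resp.\ $L^r$, convergence and, along a further subsequence, to convergence almost everywhere. Because $p>2$, the $2\times2$ minors are weakly continuous, so $H=\cof\nabla y$; writing $\det\nabla y_k=\frac13\,\cof\nabla y_k:\nabla y_k$ and pairing the strong $L^q$ convergence of the cofactors with the weak $L^p$ convergence of $\nabla y_k$ --- legitimate precisely because $q\ge p/(p-1)$ --- identifies $\delta=\det\nabla y$. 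Fatou's lemma applied to $(\det\nabla y_k)^{-s}\ge0$ gives $(\det\nabla y)^{-s}\in L^1(\O)$ and hence $\det\nabla y>0$ a.e., while compactness of the trace operator forces $y=y_0$ on $\Gamma_0$; thus $y\in\mathcal{A}$.

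The key step is to turn the merely weak convergence $\nabla y_k\wto\nabla y$ into convergence a.e. For this I would invoke the algebraic identity $\cof(\cof F)=(\det F)\,F$, valid for every $F\in\R^{3\times3}$, so that on $\{\det F>0\}$ one has $F=\cof(\cof F)/\det F$. Applying this to each $\nabla y_k$ (recall $\det\nabla y_k>0$) and using the a.e. convergences $\cof\nabla y_k\to\cof\nabla y$ and $\det\nabla y_k\to\det\nabla y>0$ together with the continuity of $\cof$, I obtain $\nabla y_k\to\nabla y$ a.e. in $\O$. It is this recovery of the gradient from its minors --- relying on the surplus integrability of $\cof\nabla y_k$ and $\det\nabla y_k$ encoded in $q\ge p/(p-1)$ and $r>1$ --- that compensates for the lack of any convexity of $\hat W$ in $F$.

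Finally, with $\nabla y_k\to\nabla y$ a.e., $\nabla[\cof\nabla y_k]\wto\nabla[\cof\nabla y]$ in $L^q$ and $\nabla[\det\nabla y_k]\wto\nabla[\det\nabla y]$ in $L^r$, I would apply a standard lower semicontinuity theorem of Ioffe type for integrands that are lower semicontinuous, nonnegative (as in \eqref{growth-graddet2}) and convex in the weakly converging variables $(\Delta_1,\Delta_2)$, obtaining $J(y)\le\liminf_k J(y_k)$. Adding the assumed weak lower semicontinuity of $-\ell$ gives $I(y)\le\liminf_k I(y_k)=\inf_{\mathcal{A}}I$, and since $y\in\mathcal{A}$ it is a minimizer. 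I expect the lower semicontinuity of $J$ to be the main obstacle: because $\hat W$ need not be quasiconvex in $\nabla y$, weak $W^{1,p}$ convergence alone is insufficient, and the whole argument hinges on the a.e. convergence of $\nabla y_k$ secured in the previous step through the double-cofactor identity.
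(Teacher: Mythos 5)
Your proposal is correct and follows essentially the same route as the paper: direct method, uniform bounds from the coercivity \eqref{growth-graddet2}, identification of the weak limits of $\cof\nabla y_k$ and $\det\nabla y_k$, recovery of a.e.\ convergence of $\nabla y_k$ by reconstructing the gradient from its minors, and an Ioffe-type lower semicontinuity theorem (the paper cites \cite[Cor.~7.9]{fonseca-leoni}) applied with $\nabla y_k\to\nabla y$ a.e.\ and $\nabla[\cof\nabla y_k]$, $\nabla[\det\nabla y_k]$ converging weakly. Your identity $\cof(\cof F)=(\det F)F$ is just an inverse-free restatement of the paper's Cramer's-rule step $\nabla y_k=(\cof\nabla y_k)^{-\top}\det\nabla y_k$, and your use of the compact embedding $W^{1,r}\hookrightarrow L^r$ for the determinants replaces the paper's derivation of their a.e.\ convergence from $\det(\cof\nabla y_k)=(\det\nabla y_k)^2$; these are only cosmetic differences.
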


\begin{proof}
Our proof closely follows the approach in \cite{bbmkas}.
Let $\{y_k\}\subset\mathcal{A}$ be a minimizing sequence of~$I$. 
Due to coercivity assumption \eqref{growth-graddet2} and the Dirichlet boundary conditions on $\Gamma_0$, we obtain that  
\begin{align}\label{bound} &\sup_{k\in\N}\big(\|y_k\|_{W^{1,p}(\O;\R^3)} +\|\cof\nabla y_k\|_{W^{1,q}(\O;\R^{3\times 3})}\nonumber\\
&+\|\det\nabla y_k\|_{W^{1,r}(\O)}+\|(\det\nabla y_k)^{-s}\|_{L^1(\O)}\big)<\infty\ . \end{align}
Hence, by standard results on weak convergence of minors,
see e.g. \cite[Thm.~7.6-1]{ciarlet},
there are (non-relabeled) subsequences such that
\begin{align*}
  y_k \wto y \text{ in } W^{1,p}(\O;\R^3),
  \quad
  \cof\nabla y_k \wto \cof\nabla y \text{ in } L^q(\O;\R^{3\times 3}),
  \quad
  \det\nabla y_k \wto \det\nabla y \text{ in } L^r(\O)
\end{align*}
for $k\to\infty$.
Moreover, since bounded sets in Sobolev spaces
are weakly sequentially compact,
\begin{align} \label{grad-det-cof-weak}
  \cof\nabla y_k \wto H \text{ in } W^{1,q}(\O;\R^{3\times 3}),
  \quad
  \det\nabla y_k \wto D \text{ in } W^{1,r}(\O)
\end{align}
for some $H\in W^{1,q}(\O;\R^{3\times 3})$ and
$D \in W^{1,r}(\O)$.
Since the~weak limit is unique, we have $H=\cof \nabla y$
and $D = \det \nabla y$.
By compact embedding also
$\cof\nabla y_k\to H$ in $L^q(\O;\R^{3\times 3})$
and hence we obtain a (non-relabeled) subsequence such that for~$k\to \infty$
\begin{align} \label{cof-conv}
  \cof \nabla y_k \to \cof \nabla y
  \quad
  \text{a.e. in } \O.
\end{align} 
Since by Cramer's formula $\det (\cof \nabla y) = (\det\nabla y)^2$,
we have for~$k\to\infty$
\begin{align} \label{det-conv}
  \det\nabla y_k \to \det\nabla y
  \quad
  \text{a.e. in } \Omega.
\end{align}

Next we show that $y$ belongs to the set of admissible functions $\mathcal{A}$.
Notice that $\det\nabla y\ge 0$ since $\det \nabla y_k >0$ for any $k\in \N$.
Further, the conditions \eqref{growth-graddet2}, \eqref{bound},
and the Fatou lemma imply that 
\begin{align*}
  +\infty
  >
  \liminf_{k\to\infty} J(y_k)+\ell(y_k)
  \ge
  \liminf_{k\to\infty}
  \int_\O \frac{1}{(\det\nabla y_k(x))^s}\,\md x
  \ge
  \int_\O \frac{1}{(\det\nabla y(x))^s}\,\md x.
\end{align*}
Hence, inevitably, $\det\nabla y>0$ almost everywhere in $\O$
and $(\det\nabla y)^{-s}\in L^1(\O)$.
Since the trace operator is continuous, we obtain that $y\in\mathcal{A}$.

By Cramer's rule, the inverse of the deformation gradient satisfies
for almost all $x\in\O$ that
\begin{align} \label{inv-conv}
  (\nabla y_k(x))^{-1}
  =
  \frac{(\cof\nabla y_k(x))^\top}{\det\nabla y_k(x)}
  \longrightarrow
  \frac{(\cof\nabla y(x))^\top}{\det\nabla y(x)}
  =
  (\nabla y(x))^{-1}.
\end{align}   
Notice that, for almost all $x\in\O$
\begin{align*}
  \sup_{k\in\N} |\nabla y_k(x)|
  &=
  \sup_{k\in\N} \det\nabla y_k(x) \ |(\cof(\nabla y_k(x))^{-\top}| \nonumber \\
  &\le
  \sup_{k\in\N} \frac32 \det\nabla y_k(x) \ |(\nabla y_k(x))^{-1}|^2
  <\infty
\end{align*}
because of the pointwise convergence of $\{\det\nabla y_k\}$
and \eqref{inv-conv}.
Consequently, we have due to \eqref{inv-conv} for almost all $x\in\O$
and $k\to\infty$
\begin{align*}
  \nabla y_k(x)
  &=
  (\cof(\nabla y_k(x))^{-\top}\det\nabla y_k(x)
  \longrightarrow
  (\cof(\nabla y(x))^{-\top}\det\nabla y(x)
  =
  \nabla y(x),
\end{align*}
where we have used that the cofactor of some matrix is invertible
whenever the matrix itself is invertible too. 
As the Lebesgue measure on $\O$ is finite,
we get by the Egoroff theorem, c.f. \cite[Thm.~2.22]{fonseca-leoni},
\begin{align} \label{y-conv}
  \nabla y_k\to\nabla y \text{ in measure}.
\end{align}

Since $\hat W$ is nonnegative and continuous
and $\hat W(F,\cdot,\cdot)$ is convex,
we may use \cite[Cor.~7.9]{fonseca-leoni}
to conclude from \eqref{y-conv} and \eqref{grad-det-cof-weak} that
\begin{align*}
  &\int_\O
    \hat W(\nabla y(x),\nabla\cof\nabla y(x),\nabla\det\nabla y(x)) \,
  \md x\nonumber\\
  &\le
  \liminf_{k\to\infty}
  \int_\O
    \hat W(\nabla y_k(x),\nabla\cof\nabla y_k(x),\nabla\det\nabla y_k(x)) \,
  \md x\ .
\end{align*}
To pass to the limit in the functional $-\ell$,
we exploit its weak lower semicontinuity.
Therefore, the~whole functional $I$ is weakly lower semicontinuous along
$\{y_k\} \subset \mathcal{A}$
and hence $y\in\mathcal{A}$ is a~minimizer of $I$.
\end{proof}

\begin{remark}
  Note that the~pointwise convergence \eqref{det-conv} of the~determinant,
  necessary for~obtaining the~crucial convergence \eqref{y-conv},
  was not achieved by~compact embedding, as it was done for $\cof \nabla y$ in \eqref{cof-conv}.
  Hence the~coercivity in~$\nabla [\det \nabla y]$ is~of~minor importance
  and can be relaxed, provided the~function $\hat{W}$ from \eqref{full-I}
  does not~depend on~its~last argument, c.f. \cite[Prop. 5.1]{bbmkas}.
  On~the~other hand, although only $\nabla [\cof \nabla y]$
  is necessary for~regularizing the~whole problem,
  making the~functional in~\eqref{full-I}
  dependent also on~$\nabla [\det \nabla y]$
  may be interesting from~the~applications' point of~view.
\end{remark}

Let $\mathcal{L}^3$ denote the Lebesgue measure in $\R^3$.
If $p>3$ and  $y\in W^{1,p}(\O;\R^3)$ is such that
$\det\nabla y>0$ almost everywhere in $\O$,
then the so-called Ciarlet-Ne\v{c}as condition
\begin{align}\label{c-n}
  \int_\O\det\nabla y(x)\,\md x\le\mathcal{L}^3(y(\O))\ 
\end{align}
derived in \cite{ciarlet-necas2}
ensures almost-everywhere injectivity of deformations.
If
\begin{align} \label{h-k}
  \frac{|\nabla y|^3}{\det\nabla y} \in L^{\delta}(\O)
\end{align}
for some $\delta>2$ and \eqref{c-n} holds,
then we~even get invertibility everywhere in $\O$
due to \cite[Theorem~3.4]{hencl-koskela}.
Namely, this then implies that $y$ is an open map.
Hence, we get the following corollary of Proposition~\ref{prop-grad-poly}. 

\bigskip

\begin{corollary}
\label{cor-grad-poly}
Let $\O\subset\R^3$ be a bounded Lipschitz domain, 
and let  $\Gamma=\Gamma_0\cup\Gamma_1$ be a $\md A$-measurable partition of $\Gamma=\partial\O$ with the area of $\Gamma_0>0$. Let further $\ell:W^{1,p}(\O;\R^3)\to\R$ be a weakly upper semicontinuous functional  and $J$ as in \eqref{full-I} be gradient polyconvex on $\Omega$ such that $W$ satisfies \eqref{growth-graddet2}.  Finally, let  $p> 6$, $q\ge\frac{p}{p-1}$, $r>1$,  $s>2p/(p-6)$,
and assume that for some given measurable function $y_{\text{D}}: \Gamma_{\text{D}} \to \R^3$ the following set 
 \begin{align*}
\mathcal{A}:&=\{y\in W^{1,p}(\O;\R^3):\ \cof \nabla y\in W^{1,q}(\O;\R^{3\times 3}),\ \det \nabla y\in W^{1,r}(\O),\nonumber\\
& \qquad (\det\nabla y)^{-s}\in L^1(\O),\  \det\nabla y>0\mbox{ a.e. in $\Omega$},\ y=y_{\text{D}}\mbox{ on }\Gamma_{\text{D}},\, \eqref{c-n}\mbox{ holds}\}
\end{align*}
is nonempty.
 If $\inf_{\mathcal{A}} I<\infty$  for $I$ from \eqref{functional-sma} then
 the functional  $I$  has a minimizer on $\mathcal{A}$ which is injective everywhere in $\O$.
\end{corollary}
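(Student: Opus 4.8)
The plan is to produce a minimizer by the direct method, reusing the compactness and lower-semicontinuity analysis already carried out in Proposition~\ref{prop-grad-poly}, and then to promote almost-everywhere injectivity to everywhere injectivity by checking the Hencl--Koskela hypothesis \eqref{h-k} and applying \cite[Theorem~3.4]{hencl-koskela}. Let $\{y_k\}\subset\mathcal{A}$ be a minimizing sequence. Since $p>6>2$ and the exponents $q\ge p/(p-1)$, $r>1$, $s>0$ satisfy the assumptions of Proposition~\ref{prop-grad-poly}, the coercivity \eqref{growth-graddet2} furnishes the uniform bound \eqref{bound}, and extracting subsequences exactly as there yields a weak limit $y$ together with the convergences \eqref{grad-det-cof-weak}--\eqref{y-conv}. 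As $\ell$ is weakly upper semicontinuous, $-\ell$ is weakly lower semicontinuous, so the same argument gives $I(y)\le\liminf_{k\to\infty}I(y_k)=\inf_{\mathcal{A}}I$, and $y$ already meets every defining condition of $\mathcal{A}$ except possibly the Ciarlet--Ne\v{c}as condition \eqref{c-n}.

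The decisive additional point is that \eqref{c-n} survives the passage to the weak limit, i.e.\ that the admissible set is weakly closed. Because $p>3$, the embedding $W^{1,p}(\O;\R^3)\hookrightarrow C(\bar\O;\R^3)$ is compact, so along a further subsequence $y_k\to y$ uniformly on $\bar\O$; simultaneously $\det\nabla y_k\wto\det\nabla y$ in $L^r(\O)$ by \eqref{grad-det-cof-weak}, whence $\int_\O\det\nabla y_k\,\md x\to\int_\O\det\nabla y\,\md x$ upon testing against the constant $1\in L^{r'}(\O)$. Passing to the limit in $\int_\O\det\nabla y_k\,\md x\le\mathcal{L}^3(y_k(\O))$ then requires controlling $\limsup_k\mathcal{L}^3(y_k(\O))$ from above by $\mathcal{L}^3(y(\O))$; this stability of the Ciarlet--Ne\v{c}as constraint under weak $W^{1,p}$ convergence for $p>3$ is the genuinely delicate step, and is the weak-closedness property underlying the condition, cf.\ \cite{ciarlet-necas2}. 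Granting it, $y$ satisfies \eqref{c-n}, so $y\in\mathcal{A}$ and, by the lower-semicontinuity bound above, $y$ is a minimizer of $I$ on $\mathcal{A}$.

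It remains to verify \eqref{h-k} for this $y$ and to invoke Hencl--Koskela. Writing $\dfrac{|\nabla y|^3}{\det\nabla y}=|\nabla y|^3\,(\det\nabla y)^{-1}$, I would estimate the two factors separately: $|\nabla y|\in L^p(\O)$ gives $|\nabla y|^3\in L^{p/3}(\O)$, while $(\det\nabla y)^{-s}\in L^1(\O)$ gives $(\det\nabla y)^{-1}\in L^s(\O)$. With $\tfrac1\delta:=\tfrac3p+\tfrac1s$, H\"older's inequality places the product in $L^\delta(\O)$, and the hypothesis $s>2p/(p-6)$ yields $\tfrac1s<\tfrac12-\tfrac3p$, hence $\tfrac1\delta<\tfrac12$, that is $\delta>2$. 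Thus \eqref{h-k} holds with some $\delta>2$; since \eqref{c-n} holds as well, \cite[Theorem~3.4]{hencl-koskela} shows that $y$ is an open map and injective everywhere in $\O$, which completes the proof. The two loci of real work are the weak closedness of the Ciarlet--Ne\v{c}as constraint, which I expect to be the main obstacle, and the H\"older bound, whose exponent bookkeeping is precisely what forces the thresholds $p>6$ and $s>2p/(p-6)$.
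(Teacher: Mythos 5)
Your proof is correct and is essentially the argument the paper intends (the corollary is stated there without a separate proof, relying on the discussion that precedes it): the direct method exactly as in Proposition~\ref{prop-grad-poly}, the H\"{o}lder bookkeeping $\frac{1}{\delta}=\frac{3}{p}+\frac{1}{s}<\frac{1}{2}$ showing that $p>6$ and $s>2p/(p-6)$ yield \eqref{h-k} with some $\delta>2$, and then \cite[Theorem~3.4]{hencl-koskela} combined with the almost-everywhere injectivity coming from \eqref{c-n}. The step you leave to a citation --- weak closedness of the Ciarlet--Ne\v{c}as constraint along the minimizing sequence --- is indeed required (and is passed over in silence by the paper, whose Proposition~\ref{prop-grad-poly} does not include \eqref{c-n} in its admissible set), and your sketch of it is the standard argument of \cite{ciarlet-necas2}: uniform convergence $y_k\to y$ on $\bar\O$ from the compact embedding for $p>3$, convergence $\int_\O\det\nabla y_k\,\md x\to\int_\O\det\nabla y\,\md x$, and the bound $\limsup_k\mathcal{L}^3(y_k(\O))\le\mathcal{L}^3(y(\O))$ obtained from shrinking neighborhoods of $y(\bar\O)$ together with the Lusin (N) property of $W^{1,p}$-maps, so the citation legitimately closes that step.
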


A~simple example of an energy density which satisfies the assumptions of Proposition~\ref{cor-grad-poly} and Corollary~\ref{cor-grad-poly} is 
\begin{align*}
\hat W(F,\Delta_1,\Delta_2)=\begin{cases}
W(F)+\varepsilon\big(|F|^p  +|\cof F|^q+(\det F)^r+ (\det F)^{-s}+|\Delta_1|^q + |\Delta_2|^r\big)
&\!\!\!\!\text{if }\det F>0,\\
+\infty&\text{ otherwise} \end{cases}
\end{align*}
for $W$ defined in \eqref{stored-sma}.

\begin{remark}[Gradient-polyconvex materials and smoothness of stress]\label{rem:transfer}
Gradient-polyconvex materials enable us to control
regularity of the first Piola-Kirchhoff stress tensor
by means of smoothness of the Cauchy stress. 
Assume that the Cauchy stress tensor $T^y:y(\O)\to\R^{3\times 3}$
is Lipschitz continuous, for instance.
If $\cof\nabla y:\Omega\to\R^{3\times 3}$ is Lipschitz continuous too,
then the first Piola-Kirchhoff stress tensor $S$ inherits
the Lipschitz continuity from $T^y$ because 
\begin{align*}
S(x):=T^y(x^y)\cof\nabla y(x)\ ,
\end{align*}
where $x^y:=y(x)$.
In a similar fashion, one can transfer H\"{o}lder continuity of $T^y$
to $S$ via H\"{o}lder continuity of $x\mapsto\cof\nabla y$.  
\end{remark}

In~literature, examples of stored energy density functions in nonlinear elasticity
are usually minimized on ${\rm SO}(3)$.
In the context of shape-memory alloys, the stored energy density is minimized 
on~${\rm SO}(3)F_i$, $F_i\ne F_j$, $i,j=0,\ldots,M$.
To~construct such energy densities explicitly, we can now proceed as follows.
Assume that $V:\R^{3\times 3}\to\R\cup\{+\infty\}$ is minimized
on~${\rm SO}(3)$ and that $V(F)=\varphi(F^\top F)=\varphi(C)$
for~some function $\varphi : \mathbb{R}^{3 \times 3}_{sym} \to \mathbb{R} \cup \{ +\infty \}$
and $C = F^\top F$ the~right-Cauchy-Green tensor.
It~is easy to~see that $\varphi$ is~minimized in~$\mathbb{I}$.
Considering the polar decomposition of~$F_i\in\R^{3\times 3}$ with $\det F_i>0$,
we can write $F_i=R_iU_i$ where $R_i$ is a rotation and $U_i$ is symmetric and positive definite matrix.
Note that $C_i = U_i^2$.
Bearing this in~mind, we define the~energy of~the~$i$-th variant via a~shift
\begin{align*}
  W_i(F) := V(FU_i^{-1}) = \varphi(U_i^{-1}CU_i^{-1})
\end{align*}
which is clearly minimized on~${\rm SO}(3)F_i$.
Notice also that if $V$ is polyconvex, so is $W_i$.


\section{Evolution}
If the loading changes in time
or if the boundary condition becomes time-dependent,
then the~specimen evolves as well. 
Evolution is typically connected with energy dissipation.
Experimental evidence shows
that considering a rate-independent dissipation mechanism
is a reasonable approximation
in a wide range of rates of external loads.
We hence need to define a suitable dissipation function.
Since we consider a rate-independent processes,
this dissipation will be positively one-homogeneous.
We associate the~dissipation to~the~magnitude
of~the~time derivative of~the~dissipative variable
$z \in \mathbb{R}^{M+1}$, where $M \in \N$,
i.e. to $|\dot{z}|_{M+1}$,
where $|\cdot|_{M+1}$ denotes a~norm on~$\mathbb{R}^{M+1}$
(in~our setting,
the~internal variable $z$ can be seen as~a~vector of~volume fractions
of~austenite and variants of~martensite).
Therefore, the specific dissipated energy
associated to a~change from state $z^1$ to $z^2$ is postulated as
\begin{align*}
  D(z^1,z^2):=|z^1-z^2|_{M+1}.
\end{align*}
Hence, for $z^i : \Omega \to \mathbb{R}^{M+1}$, $i = 1, 2$, the total dissipation reads
\begin{align*}
  \mathcal{D}(z^1,z^2):=\int_\O D(z^1(x),z^2(x))\ dx \ ,
\end{align*}
and the~total $\DD$-dissipation of a~time dependent curve $z : t \in [0,T] \mapsto z(t)$,
where $z(t) : \Omega \to \mathbb{R}^{M+1}$, is defined as
\begin{align*}
  {\rm Diss}_\DD(z,[s,t])
  :=
  \sup \Big \{ \sum_{j=1}^N \DD(z(t_{i-1}),z(t_i)) : N \in \N, s = t_0 \leq \ldots \leq t_N = t \Big \}
\end{align*}

Let $\mathcal{Z}$ denote the~set of~all admissible states of~internal variables $z : \Omega \to \mathbb{R}^{M+1}$
and $\mathcal{A}$ be the~set of~admissible deformations as before.
For a~given $(t,y,z)\in[0,T]\times\VV\times\ZZ$ we define the total energy of~the~system by
\begin{align*}
  \mathcal{E}(t,y,z) =
    \left\{
      \begin{array}{ll}
        \displaystyle
        J(y) - L(t,y) & \text{if } z = \lambda(\nabla y) \text{ a.e. in } \Omega, \\
        +\infty          & \text{otherwise,}
      \end{array}
    \right. ,
\end{align*}
where $L(t,\cdot)$ is a~functional on~deformations expressing time-dependent loading of~the~specimen
and $\lambda : \mathbb{R}^{3 \times 3} \to \mathbb{R}^{M+1}$
is~a~function relating the~deformation gradient with the~internal variable $z$.
For~example, we can define the~$j$th component of~$\lambda \in \R^{M+1}$ as
\begin{align*}
   \lambda^j (F)
   :=
   \frac1M \left(1- \frac{\dist(C,\NN(C_j))}{\sum_{i=0}^M \dist(C,\NN(C_i))}\right)
   \quad
   \forall C = F^T F\in \R^{3\times 3}, \quad j=0,\ldots, M\ ,
\end{align*}
where $\NN(C_i)$ are pairwise disjoint neighborhoods of $C_i$, $i=0,\ldots,M$.
\begin{remark}
The particular choice of $\lambda$ allows for some elastic behavior close to the wells $SO(3)F_i$, $i=0,\ldots, M$.
Note that $\lambda$ is continuous and frame-indifferent, and $\sum_{j=0}^M \lambda_j(F) = 1$ for all $F\in\R^{3\times 3}$.
\end{remark}

\section{Energetic solution}

Suppose, that we look for the time evolution of $t\mapsto y(t)\in \VV$
and $t\mapsto z(t)\in \ZZ := L^\infty(\Omega, \R^{M+1})$ during a process on~a~time interval $[0,T]$,
where $T>0$ is the time horizon.
We use the following notion of solution from \cite{FrancfortMielke-2006}, see also
\cite{mielke-theil-2, mielke-theil-levitas}.
For a~given energy $\mathcal{E}$, dissipation distance $\mathcal{D}$
and every admissible configuration living in 
\begin{align*}
 \QQ :=\{ (y,  z) \in  \VV\times \ZZ :  \lambda(\nabla y) = z \text{ a.e. in } \Omega \} 
\end{align*}
we ask the following conditions to be satisfied.
\begin{definition}[Energetic solution]
  We say that $(y,z) : [0,T] \to \QQ$ is an energetic solution to~$(\mathcal{Q},\EE,\DD)$
  if $t \mapsto \p_t \EE(y(t),z(t)) \in L^1(0,T)$
  and if for all $t \in [0,T]$ the stability condition
\begin{align}
  &\EE(t, y(t),z(t))\leq \EE(t,\tilde y,\tilde z)+\mathcal{D}(z(t),\tilde z) \qqquad %
  \text{$\forall (\tilde y,\tilde z)\in\QQ$}. \tag{S}
\end{align}
and the~energy balance
  \begin{align}  \tag{E}      %
    \begin{aligned}
    &\EE(t, y(t),z(t))+{\rm Diss}_\DD(z;[s,t]) =\EE(s,y(s),z(s)) +\displaystyle \int_0^t \partial_t \EE(s, y(s),z(s))\,\md s \hspace{-2ex}   
    \end{aligned} 
  \end{align}
  are satisfied.
\end{definition}

An important role is played by the~set of~so-called stable states,
defined for each $t\in[0,T]$ as
\begin{align*}
  \mathbb{S}(t)
  :=
  \{
    (y,z)\in\QQ: \,
    \EE(t,y,z) < +\infty \text{ and }
    \EE(t,y,z)\leq \EE(t,\tilde y,\tilde z)+\mathcal{D}(z,\tilde z)\,\forall (\tilde y,\tilde z)\in\QQ
  \}\ .
\end{align*}

\subsection{Existence of the energetic solution}

A standard way how to prove the existence of an energetic solution is to construct time-discrete minimization problems
and then to pass to the limit. Before we give the existence proof we need some auxiliary results. For given $N\in\N$ and
for $0\leq k\leq N$, we define the time increments $t_k:=kT/N$. Furthermore, we use the abbreviation $q:=(y,z)\in\QQ$.
We assume that there exists an~admissible deformation $y^0$ compatible with the~initial volume fraction
$z^0$, i.e. $q^0:=(y^0,z^0) \in \mathbb{S}(0)$.
For $k=1,\ldots, N$, we define a sequence of minimization problems
\begin{align}\label{incremental}
  \text{minimize } \II_k(y,z) := \EE(t_k, y,z)+\DD(z,z^{k-1})\ ,\ (y,z)\in \QQ\ .
\end{align}
We denote a minimizer of \eqref{incremental} for a given $k$ as $q^N_k :=(y^k,z^k)\in\QQ$ for $1 \leq k \leq N$.  The following lemma shows that
a minimizer always exists if the elastic energy is not identically infinite on $\QQ$.

\begin{lemma} \label{lem-dav} %
Let $\O\subset\R^3$ be a bounded Lipschitz domain, 
and let  $\Gamma=\Gamma_0\cup\Gamma_1$ be a $\md A$-measurable partition of $\Gamma=\partial\O$ with the area of $\Gamma_{\text{D}}>0$. 
Let $J$, of the~from \eqref{full-I}, be gradient polyconvex on~$\Omega$
and such that the~stored energy density $W$ satisfies \eqref{growth-graddet2}.
Moreover, let $L \in C^1([0,T]; W^{1,p}(\Omega;\R^3))$ be such that for some $C>0$ and $1\leq \alpha < p$
\begin{align*}
  L(t,y) \leq C \| y\|^\alpha_{W^{1,p}} \quad\forall t\in [0,T] \ 
\end{align*}
and $y\mapsto -L(t,y)$ be weakly lower semicontinuous on $W^{1,p}(\Omega;\R^3)$ for all $t\in [0,T]$.
Finally, let $p> 6$, $q\ge\frac{p}{p-1}$, $r>1$, $s>2p/(p-6)$.
 
If there is $(y,z)\in \QQ$ such that $\II_k(y,z)<\infty$
for $\II_k$ from \eqref{incremental},
then the functional $\II_k$ has a~minimizer $q^N_k=(y^k,z^k)\in \QQ$
such that $y_k$ is injective everywhere in $\O$.
Moreover, $q^N_k\in \mathbb{S}(t_k)$ for~all $1 \leq k \leq N$.  
\end{lemma}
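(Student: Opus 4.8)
The plan is to combine the direct method underlying Proposition~\ref{prop-grad-poly} and Corollary~\ref{cor-grad-poly} with the standard observation that minimizers of the incremental problem \eqref{incremental} are automatically stable. Since $\inf_\QQ\II_k<\infty$ by hypothesis, I would start from a minimizing sequence $\{(y_j,z_j)\}\subset\QQ$ for $\II_k$. Because $\DD(\cdot,z^{k-1})\ge 0$ and $L(t_k,y)\le C\|y\|^\alpha_{W^{1,p}}$ with $\alpha<p$, the coercivity \eqref{growth-graddet2}, together with the Dirichlet datum built into $\VV=\mathcal{A}$ (via a Poincaré inequality on $\Gamma_0$), yields the uniform bound
\begin{align*}
\sup_{j}\big(\|y_j\|_{W^{1,p}(\O;\R^3)}+\|\cof\nabla y_j\|_{W^{1,q}(\O;\R^{3\times 3})}+\|\det\nabla y_j\|_{W^{1,r}(\O)}+\|(\det\nabla y_j)^{-s}\|_{L^1(\O)}\big)<\infty.
\end{align*}
From here the compactness and weak-continuity-of-minors machinery of Proposition~\ref{prop-grad-poly} applies verbatim: after passing to a subsequence I obtain $y_j\wto y$ in $W^{1,p}$, $\cof\nabla y_j\wto\cof\nabla y$ in $W^{1,q}$, $\det\nabla y_j\wto\det\nabla y$ in $W^{1,r}$, the a.e.\ limits $\cof\nabla y_j\to\cof\nabla y$ and $\det\nabla y_j\to\det\nabla y$, and finally, through Cramer's rule and the Egoroff theorem, $\nabla y_j\to\nabla y$ in measure; moreover $\det\nabla y>0$ a.e.\ and $(\det\nabla y)^{-s}\in L^1(\O)$, so $y\in\mathcal{A}$.

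The genuinely new point is that the nonlinear constraint $z=\lambda(\nabla y)$ must survive in the limit, and for this the convergence in measure of the gradients (not merely their weak convergence) is essential. Since $\lambda$ is continuous with $0\le\lambda^j\le 1/M$ and $\sum_j\lambda^j\equiv 1$, the sequence $z_j=\lambda(\nabla y_j)$ is bounded in $L^\infty(\O;\R^{M+1})$, and continuity of $\lambda$ combined with $\nabla y_j\to\nabla y$ in measure gives $z_j\to\lambda(\nabla y)=:z$ in measure; dominated convergence then upgrades this to $z_j\to z$ strongly in $L^1(\O;\R^{M+1})$. In particular $(y,z)\in\QQ$, and the reverse triangle inequality $|\DD(z_j,z^{k-1})-\DD(z,z^{k-1})|\le\|z_j-z\|_{L^1}$ shows $\DD(z_j,z^{k-1})\to\DD(z,z^{k-1})$. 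Weak lower semicontinuity of $J$ (via \cite[Cor.~7.9]{fonseca-leoni}, using convexity of $\hat W(F,\cdot,\cdot)$, the convergence $\nabla y_j\to\nabla y$ in measure, and the weak convergence of $\nabla[\cof\nabla y_j]$ and $\nabla[\det\nabla y_j]$) together with the assumed weak lower semicontinuity of $-L(t_k,\cdot)$ then gives $\EE(t_k,y,z)\le\liminf_j\EE(t_k,y_j,z_j)$, whence $\II_k(y,z)\le\liminf_j\II_k(y_j,z_j)=\inf_\QQ\II_k$, so $(y,z)=q^N_k$ is a minimizer.

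Injectivity everywhere follows exactly as in Corollary~\ref{cor-grad-poly}: the Ciarlet--Ne\v{c}as condition \eqref{c-n} is stable under weak $W^{1,p}$-convergence for $p>3$, and since $p>6$ and $s>2p/(p-6)$, Hölder's inequality with exponent $\delta=ps/(3s+p)>2$ gives $|\nabla y^k|^3/\det\nabla y^k\in L^\delta(\O)$, i.e.\ \eqref{h-k}; \cite[Theorem~3.4]{hencl-koskela} then makes $y^k$ injective everywhere. Finally, for stability I would combine minimality of $q^N_k$ with the triangle inequality for $\DD$: for every $(\tilde y,\tilde z)\in\QQ$,
\begin{align*}
\EE(t_k,y^k,z^k)+\DD(z^k,z^{k-1})
&\le\EE(t_k,\tilde y,\tilde z)+\DD(\tilde z,z^{k-1})\\
&\le\EE(t_k,\tilde y,\tilde z)+\DD(\tilde z,z^k)+\DD(z^k,z^{k-1}),
\end{align*}
and cancelling $\DD(z^k,z^{k-1})$ yields the stability inequality (S); as $\EE(t_k,q^N_k)<\infty$, we conclude $q^N_k\in\mathbb{S}(t_k)$. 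The hard part is the analytical first half---securing $\nabla y_j\to\nabla y$ in measure so that both the pointwise constraint $z=\lambda(\nabla y)$ and the dissipation pass to the limit; the stability assertion is then a routine consequence of minimality and the triangle inequality of $\DD$.
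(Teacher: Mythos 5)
Your proposal is correct and follows essentially the same route as the paper's proof: the direct method with the compactness and weak-continuity machinery of Proposition~\ref{prop-grad-poly}, convergence in measure of the gradients to pass to the limit in the constraint $z=\lambda(\nabla y)$ and in the dissipation, injectivity via the reasoning of Corollary~\ref{cor-grad-poly}, and the standard triangle-inequality argument for stability. The only differences are cosmetic: where the paper tersely invokes strong $L^{\tilde p}$ convergence of gradients, convexity of $\DD(\cdot,z^{k-1})$, and a citation to \cite{FrancfortMielke-2006} for stability, you spell out the equivalent steps (dominated convergence for $z_j$, continuity of the dissipation, and the explicit H\"older computation giving \eqref{h-k} and the cancellation argument for (S)) in full detail.
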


\begin{proof}
Since the discretized problem \ref{incremental} has a~purely static character,
we can follow the proof of~Proposition~\ref{prop-grad-poly}.
Let $\{(y^k_j, z^k_j)\}_{j\in\N} \subset \QQ$ be a minimizing sequence.
As
\begin{align*}
 \nabla y^k_j \longrightarrow \nabla y^k
 \quad
 \mbox{ strongly in }
 L^{\tilde p}(\Omega, \R^{3\times 3}) \mbox{ as } j\to \infty
\end{align*}
for every $1 \leq \tilde p< p$
and $\lambda \in C(\R^{3\times 3}, \R^{M+1})$ is bounded,
we obtain that 
\begin{align*}
 z^k_j = 
 \lambda(\nabla y^k_j) \longrightarrow \lambda(\nabla y^k)
 \quad
 \mbox{ strongly in } L^{\tilde p}(\Omega, \R^{M+1}) 
\mbox{ as } j \to \infty \ .
\end{align*}
Since $\| z^k_j\|_{L^1(\Omega,\R^{M+1})}$ is uniformly bounded in $j$,
there is a subsequence such that $z^k_j \stackrel{*}{\longrightarrow} \mu^k$
in Radon measures on $\Omega$.
This shows that $z^k:= \mu^k = \lambda(\nabla y^k)$
and hence $q^N_k=(y^k,z^k) \in \mathcal{Q}$.
Since $\DD(\cdot, z^{k-1})$ is convex,
we obtain that $q^N_k$ is indeed a~minimizer of~$\mathcal{I}_k$.
Moreover $y_k$ is injective everywhere by the~reasoning
used for~proving Corollary \ref{cor-grad-poly}.
The~stability $q^N_k\in \mathbb{S}(t_k)$ follows by standard arguments,
see e.g. \cite{FrancfortMielke-2006}.
\end{proof}

Denoting by $B([0,T];\YY)$ the set of bounded maps
$t\mapsto y(t)\in\YY$ for all $t\in[0,T]$,
we have the~following result showing the existence of an energetic solution
to the problem $(\mathcal{Q},\EE,\DD)$.
\begin{theorem} \label{thm:main}
  Let $T>0$ and let the assumptions in Lemma~\ref{lem-dav} be satisfied.
  Moreover, let the initial condition be stable,
  i.e. $q^0 := (y^0,z^0) \in \mathbb{S}(0)$.
  Then there is an energetic solution to $(\mathcal{Q},\EE,\DD)$
  satisfying $q(0) = q^0$
  and such that $y\in B([0,T];\YY)$,
  $z\in {\rm BV}([0,T]; L^1(\O;\R^{M+1}))\cap L^\infty(0,T;\ZZ)$, and 
  for all $t\in[0,T]$ the identidy $\lambda(\nabla y(t,\cdot))=z(t,\cdot)$ holds a.e.\ in~$\O$.
  Moreover, for all $t \in [0,T]$ the~deformation $y(t)$
  is injective everywhere in~$\O$.
\end{theorem}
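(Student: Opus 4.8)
\section*{Proof proposal}

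The plan is to run the by-now-standard time-incremental scheme of the energetic framework (cf.\ \cite{FrancfortMielke-2006,mielke-theil-2}) on top of the static existence and compactness already established in Proposition~\ref{prop-grad-poly} and Corollary~\ref{cor-grad-poly}. First I would fix $N\in\N$, use the partition $t_k=kT/N$, and invoke Lemma~\ref{lem-dav} to obtain for each $1\le k\le N$ a minimizer $q^N_k=(y^k,z^k)\in\mathbb{S}(t_k)$ of the incremental functional \eqref{incremental}, with $y^k$ injective everywhere. Testing the minimality of $q^N_k$ against the admissible competitor $q^N_{k-1}$ and exploiting $L\in C^1([0,T];W^{1,p}(\O;\R^3))$ gives the discrete upper energy estimate
\begin{align*}
  \EE(t_k,q^N_k)+\DD(z^k,z^{k-1})
  \le
  \EE(t_{k-1},q^N_{k-1})
  +\int_{t_{k-1}}^{t_k}\partial_t\EE(s,q^N_{k-1})\,\md s .
\end{align*}
Summation over $k$, combined with the coercivity \eqref{growth-graddet2} and the sublinear bound $L(t,y)\le C\|y\|_{W^{1,p}}^{\alpha}$, yields bounds on $\sup_k\EE(t_k,q^N_k)$, on $\|y^k\|_{W^{1,p}}$ and all minors appearing in \eqref{growth-graddet2}, and on the total dissipation, all uniform in $N$. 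I denote by $\bar y_N,\bar z_N,\bar q_N$ the right-continuous piecewise-constant interpolants.

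Second, I would extract limits. Since $\DD(z^1,z^2)=\|z^1-z^2\|_{L^1(\O;\R^{M+1})}$ and the dissipation is uniformly bounded, a generalized Helly selection principle (see \cite{FrancfortMielke-2006,mielke-theil-2}) provides a subsequence and a limit $z:[0,T]\to\ZZ$ of bounded variation with $\bar z_N(t)\to z(t)$ in $L^1(\O;\R^{M+1})$ for every $t\in[0,T]$. At each fixed $t$ the energy bound keeps $\bar y_N(t)$ bounded in $\mathcal{A}$, so exactly the compactness argument of Proposition~\ref{prop-grad-poly} applies along a $t$-dependent subsequence: $\cof\nabla\bar y_N(t)$ and $\det\nabla\bar y_N(t)$ converge a.e.\ and, by Cramer's rule, $\nabla\bar y_N(t)\to\nabla y(t)$ in measure. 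Continuity and boundedness of $\lambda$ then force $\lambda(\nabla y(t))=\lim_N\lambda(\nabla\bar y_N(t))=\lim_N\bar z_N(t)=z(t)$, so $(y(t),z(t))\in\QQ$ for all $t$; moreover each $y(t)$ inherits injectivity everywhere from the (weakly closed) Ciarlet--Ne\v cas condition \eqref{c-n} and the Hencl--Koskela bound, precisely as in Corollary~\ref{cor-grad-poly}. The regularity $z\in\BV([0,T];L^1(\O;\R^{M+1}))\cap L^\infty(0,T;\ZZ)$ and $y\in B([0,T];\mathcal{A})$ follow from the dissipation bound, the boundedness of $\lambda$, and the uniform energy bound, respectively.

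Third, I would verify the two defining properties. For stability (S) at a fixed $t$, I use that the competitor set $\QQ$ is independent of the current state, so a constant mutual recovery sequence suffices: the stability $q^N_{k}\in\mathbb{S}(t_k)$ of the interpolant reads $\EE(t_k,\bar q_N(t))\le\EE(t_k,\tilde y,\tilde z)+\DD(\bar z_N(t),\tilde z)$ for any $(\tilde y,\tilde z)\in\QQ$; letting $N\to\infty$ (so $t_k\to t$) and passing to the limit by weak lower semicontinuity of $\EE(t,\cdot)=J(\cdot)-L(t,\cdot)$ on the left, by continuity of $t\mapsto L(t,\cdot)$, and by $L^1$-continuity of $\DD(\cdot,\tilde z)$ on the right, yields (S). For the energy balance (E), the upper estimate ``$\le$'' follows by taking $N\to\infty$ in the summed discrete inequality, using lower semicontinuity at the endpoint together with Riemann-sum convergence of $\int_0^t\partial_t\EE(s,\bar q_N(s))\,\md s$ (here $\partial_t\EE=-\partial_t L$, and the uniform $W^{1,p}$ bound with $L\in C^1$ make $t\mapsto\partial_t\EE(y(t),z(t))\in L^1(0,T)$); the reverse estimate ``$\ge$'' is the standard consequence of stability obtained by testing (S) along a refining sequence of partitions of $[s,t]$ and passing to the Riemann sum, cf.\ \cite{FrancfortMielke-2006,mielke-theil-2}.

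The main obstacle I anticipate is the simultaneous, for-all-$t$ treatment of the deformation component: Helly's theorem controls $z$ in time, but $y(t)$ must be reconstructed by a separate extraction at each $t$, and one must certify the compatibility $z(t)=\lambda(\nabla y(t))$, i.e.\ $(y(t),z(t))\in\QQ$. This hinges on upgrading weak $W^{1,p}$-convergence of $\bar y_N(t)$ to convergence of $\nabla\bar y_N(t)$ in measure, which is exactly the nontrivial point delivered by gradient polyconvexity in Proposition~\ref{prop-grad-poly}; without it, $\lambda(\nabla\bar y_N(t))$ need not converge to $\lambda(\nabla y(t))$ and the constraint could be lost in the limit. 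The residual technical points --- measurability in $t$ of the selected $y(\cdot)$ and the integrability $t\mapsto\partial_t\EE\in L^1(0,T)$ --- are routine given the $C^1$-in-time loading and the uniform spatial bounds.
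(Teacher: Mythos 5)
Your proposal is correct and follows essentially the same route as the paper: time-incremental minimization via Lemma~\ref{lem-dav}, uniform a-priori estimates and discrete energy inequalities in the spirit of \cite{FrancfortMielke-2006}, a generalized Helly selection principle for $z$, pointwise-in-$t$ extraction for $y$ using the compactness delivered by gradient polyconvexity (Proposition~\ref{prop-grad-poly}), closedness of the stability relation via a constant competitor, and injectivity for every $t$ by repeating the argument of Corollary~\ref{cor-grad-poly}. You correctly single out the preservation of the constraint $z(t)=\lambda(\nabla y(t))$ --- which hinges on convergence in measure of the gradients --- as the one step that is not covered by the abstract rate-independent machinery, which is exactly the point the paper's proof also isolates.
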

\begin{proof}
  Let $q^N_k:=(y^k,z^k)$ be the solution of \eqref{incremental}
  which exists by Lemma \ref{lem-dav} and let
  $q^N:[0,T]\to \QQ$ be given by
  \begin{eqnarray*}
    q^N(t):= %
    \begin{cases}
      q^N_k &\mbox{ if $t\in [t_{k},t_{k+1})$ if $k=0,\ldots, N-1$}\ ,\\
      q^N_N &\mbox{ if $t=T$.}
    \end{cases}
  \end{eqnarray*} 
  Following \cite{FrancfortMielke-2006},
  we get for some $C>0$ and for all $N\in\N$ the estimates
  \begin{subequations}
    \begin{gather} \label{unif-est-z}
      \|z^N\|_{BV(0,T; L^1(\O;\R^{M+1}))}\leq C, \qquad
      \|z^N\|_{L^\infty(0,T; BV(\O;\R^{M+1}))}\leq C, \\ \label{unif-est-y}
      \|y^N\|_{L^\infty(0,T;W^{1,p}(\O;\R^3))}\leq C, 
    \end{gather}
  \end{subequations}
  as well as the following two-sided energy inequality
  \begin{align} \label{2-sided}
    \int_{t_{k-1}}^{t_k}\partial_t\mathcal{E}(\theta,q_{k}^N)\,{\rm d}\theta
    &\leq
    \mathcal{E}(t_k,q^N_k)+\mathcal{D}(z^k,z^{k-1})
    -\mathcal{E}(t_{k-1},q^N_{k-1}) \NT \\
    &\leq
    \int_{t_{k-1}}^{t_k}
      \partial_t\mathcal{E}(\theta,q_{k-1}^N) \,
    {\rm d}\theta\ .
  \end{align}
  The second inequality in \eqref{2-sided} follows since
  $q_{k}^N$ is a minimizer of \eqref{incremental} and by
  comparison of its energy with $q:=q_{k-1}^N$.
  The lower estimate is implied by the stability of
  $q^N_{k-1}\in\mathbb{S}(t_{k-1})$, see Lemma~\ref{lem-dav},
  when compared with $\tilde q:=q^N_{k}$.
  Having this inequality, the a-priori estimates
  and a generalized Helly's selection principle
  \cite[Cor.~2.8]{mielke-theil-levitas}, we get that there is
  indeed an~energetic solution obtained as a~limit for $N\to \infty$. 

  Let us comment more on~the~two main properties of~the~minimizer,
  namely that it is orientation preserving and injective everywhere in~$\O$.
  The condition $\det\nabla y>0$ a.e.\ in $\Omega$
  follows from the fact that if
  $t_j\to t$, $(y_{(j)}, z_{(j)}) \in \mathbb{S}(t_j)$
  and $(y_{(j)}, z_{(j)}) \wto (y,z)$
  in $W^{1,p}(\Omega;\R^3) \times BV(\O;\R^{M+1})$,
  then $(y,z) \in \mathbb{S}(t)$.
  Indeed, we have $z_{(j)}\to z$ in~$L^1(\O;\R^{M+1})$ in our setting
  and hence for all $(\tilde y,\tilde z)\in\QQ$, we get
  \begin{align*}
    \EE(t,y,z) %
    &\leq
    \liminf_{j\to \infty}\EE(t_j,y_{(j)}, z_{(j)}) %
    \leq
    \liminf_{j\to \infty} (\EE(t_j,\tilde y, \tilde z)
      + \DD(z_{(j)},\tilde z))\\
    &=
    \EE(t,\tilde y, \tilde z)+\DD(z,\tilde z)\ .
  \end{align*}
  In particular, as $\EE(t_j,\tilde y, \tilde z)$ is finite for some
  $(\tilde y,\tilde z)\in\QQ$,
  we get $\EE(t,y,z)<+\infty$ and thus $\det\nabla y>0$ a.e. in $\Omega$
  in view of \eqref{growth-graddet2}.

  In~proving injectivity, we profit again from the~fact
  that quasistatic evolution of energetic solutions
  is very close to~a~purely static problem.
  In~view of~\eqref{unif-est-y}, we obtain for each $t \in [0,T]$
  all necessary convergences
  that were used in the proof of~Corollary \ref{cor-grad-poly}
  to pass to~the~limit in~the~conditions \eqref{c-n} and \eqref{h-k}.
\end{proof}

\bigskip

{\bf Acknowledgment:}
This research was partly supported by the GA\v{C}R grants 17-04301S and 18-03834S, and by the DAAD-AV\v{C}R grant DAAD 16-14 and PPP 57212737 with funds from
BMBF.
PP moreover gratefully acknowledges the financial support
by GAUK project No.~670218,
by Charles University Research program No.~UNCE/SCI/023, and by GA\v{C}R-FWF project 16-34894L.

\end{document}